\newtheorem{proposition}{Proposition}[section]
\newtheorem{lemma}[proposition]{Lemma}
\newtheorem{theorem}[proposition]{Theorem}
\newtheorem{definition}[proposition]{Definition}
\theoremstyle{definition}
\newtheorem{example}[proposition]{Example}
\numberwithin{equation}{section}
\begin{document}

\begin{center}
\LARGE
\textbf{The diameter of products of finite simple groups}
\bigskip\bigskip

\large
Daniele Dona\footnote{The author was partially supported by the European Research Council under Programme H2020-EU.1.1., ERC Grant ID: 648329 (codename GRANT).}
\bigskip

\normalsize
Mathematisches Institut, Georg-August-Universit\"at G\"ottingen

Bunsenstra\ss e 3-5, 37073 G\"ottingen, Germany

\texttt{daniele.dona@mathematik.uni-goettingen.de}
\bigskip\bigskip\bigskip
\end{center}

\begin{minipage}{110mm}
\small
\textbf{Abstract.} Following partially a suggestion by Pyber, we prove that the diameter of a product of non-abelian finite simple groups is bounded linearly by the maximum diameter of its factors. For completeness, we include the case of abelian factors and give explicit constants in all bounds.
\medskip

\textbf{Keywords.} Finite simple groups, diameter.
\medskip

\textbf{MSC2010.} 20F69, 20D06.
\end{minipage}
\bigskip

\section{Introduction}

An important area of research in finite group theory in the last decades has been the production of upper bounds for the diameter of Cayley graphs of such groups. Arguably the best known conjecture in the area is Babai's conjecture \cite{BS88}: every non-abelian finite simple group $G$ has diameter $\leq\log^{k}|G|$, where $k$ is an absolute constant; the conjecture is still open, despite great progress towards a solution both for alternating groups and for groups of Lie type.

A more modest question is that of producing bounds for the diameter of direct products of finite simple groups, depending on the diameter of their factors. This is not an idle question, for bounds of this sort have been used more than once as intermediate steps towards the proof of bounds for simple groups themselves: Babai and Seress have done so in \cite[Lemma 5.4]{BS92}, as well as Helfgott more than two decades later in \cite[Lemma 4.13]{He18}. We improve on both results in the following theorem, which also features explicit constants.

\begin{theorem}\label{thmain}
Let $G=\prod_{i=1}^{n}T_{i}$, where the $T_{i}$ are finite simple groups.
\begin{enumerate}[(a)]
\item\label{thmainab} If the $T_{i}$ are all abelian (say $G=\prod_{j=1}^{s}(\mathbb{Z}/p_{j}\mathbb{Z})^{e_{j}}$, where the $p_{j}$ are distinct primes and $e_{j}\geq 1$), then:
\begin{equation*}
\textup{diam}(G)<\frac{2}{3}\max\{e_{j}|1\leq j\leq s\}\prod_{j=1}^{s}p_{j}.
\end{equation*}
\item\label{thmainnonab} If the $T_{i}$ are all non-abelian, call $d=\max\{\textup{diam}(T_{i})|1\leq i\leq n\}$; then:
\begin{equation*}
\textup{diam}(G)<\frac{196}{243}n^{3}\max\{C_{A},C_{L},C_{S}\}(4d+1)+d,
\end{equation*}
where:
\begin{align*}
C_{A}= & \begin{cases} \max\left\{3,\left\lfloor\frac{m}{2}\right\rfloor\right\} & \text{if there are alternating groups among the $T_{i}$} \\ & \text{and where $m$ is their maximum degree,} \\ 0 & \text{if there are no alternating groups among the $T_{i}$,} \end{cases} \\
C_{L}= & \begin{cases} 8(5r+7) & \text{if there are groups of Lie type among the $T_{i}$} \\ & \text{and where $r$ is their maximum untwisted rank,} \\ 0 & \text{if there are no groups of Lie type among the $T_{i}$,} \end{cases} \\
C_{S}= & \begin{cases} 6 & \text{if there are sporadic or Tits groups among the $T_{i}$,} \\ 0 & \text{if there are no sporadic or Tits groups among the $T_{i}$.} \end{cases}
\end{align*}
\item\label{thmainmix} If there are abelian and non-abelian $T_{i}$, write $G=G_{A}\times G_{NA}$, where $G_{A}$ collects the abelian factors and $G_{NA}$ collects the non-abelian ones; then:
\begin{equation*}
\textup{diam}(G)\leq d_{A}+4d_{NA},
\end{equation*}
where $d_{A}=\textup{diam}(G_{A}),d_{NA}=\textup{diam}(G_{NA})$.
\end{enumerate}
\end{theorem}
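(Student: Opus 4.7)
Fix a generating set $A \subseteq G$ throughout; the goal in all three parts is to express an arbitrary $g \in G$ as an $A$-word of the claimed length. For part~(\ref{thmainab}), using the Chinese Remainder Theorem I decompose $G$ as the internal direct sum of its Sylow subgroups $G_j = (\mathbb{Z}/p_j\mathbb{Z})^{e_j}$, with projections $\pi_j : G \to G_j$. For $a \in A$ of order $m_a = \prod_{j : \pi_j(a) \neq 0} p_j$, the power $a^{m_a/p_j}$ is an $A$-word of length $m_a/p_j \leq \prod_{k \neq j} p_k$ whose image lies in $G_j$ and is a unit multiple of $\pi_j(a)$ there. Hence $\{a^{m_a/p_j} : a \in A\}$ spans $G_j$ over $\mathbb{F}_{p_j}$, so I can extract from it an $e_j$-element basis and express each $g_j \in G_j$ as a linear combination with coefficients in $\{-(p_j-1)/2,\ldots,(p_j-1)/2\}$. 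Summing the resulting lengths across $j$ and optimizing --- plausibly via the invariant factor decomposition $G \cong \bigoplus_{i=1}^{\max_j e_j} \mathbb{Z}/N_i\mathbb{Z}$ with $N_1 = \prod_j p_j$ --- should yield the stated constant $\tfrac{2}{3}$; this constant-chasing is the main source of work in (a).

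Part~(\ref{thmainnonab}) is the technical heart of the paper, refining Lemma~5.4 of \cite{BS92} and Lemma~4.13 of \cite{He18}. My plan is threefold. First, using CFSG, I would produce for each isomorphism type of non-abelian simple group a distinguished small generating set whose elements are reachable as short $A$-words after projection to the relevant $T_i$: 3-cycles for alternating groups (cost $C_A$), root-subgroup elements for Lie-type groups (cost $C_L$), and exhaustively chosen small-order elements for sporadic or Tits groups (cost $C_S$). Second, and this is where I expect the main obstacle to lie, I would isolate each factor: starting from an element that acts as a preferred generator on some $T_i$ but may be non-trivial elsewhere, use commutator manipulations combined with a pigeonhole argument over the $n$ factors to convert it into an element that acts as the preferred generator on $T_i$ and as the identity on every other $T_j$. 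Third, I would reach an arbitrary $g = (g_1, \ldots, g_n)$ by adjusting each factor in turn using the isolated generators. The isolation step appears to contribute the main overhead, plausibly a factor $n^2$ via iterated pigeonhole, which together with the $n$ iterations of the synthesis loop and a $(4d+1)$ contribution for wrapping a single commutator around an $A$-word of diameter $d$ would produce the $n^3\max\{C_A,C_L,C_S\}(4d+1)$ main term, with the remaining $+d$ for a final cleanup.

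For part~(\ref{thmainmix}), I would invoke the Ore conjecture, now a theorem of Liebeck--O'Brien--Shalev--Tiep: every element of a non-abelian finite simple group is a single commutator. Taking commutators coordinate-wise, the same then holds for $G_{NA} = \prod_i T_i$. Given $g = (g_A, g_{NA}) \in G_A \times G_{NA}$, I first lift a shortest $A$-word representing $g_A$ to an $A$-word $v$ of length $d_A$ in $G$; this $v$ has the form $v = (g_A, k)$ for some $k \in G_{NA}$. I then write $k^{-1} g_{NA} = [a, b]$ in $G_{NA}$ and lift $a, b$ to $A$-words $\tilde a, \tilde b$ of length at most $d_{NA}$ each. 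Because $G_A$ is abelian, the commutator $[\tilde a, \tilde b]$ has trivial $G_A$-component and therefore equals $(e, [a, b]) = (e, k^{-1} g_{NA})$, a word of length at most $4 d_{NA}$. Thus $g = v \cdot [\tilde a, \tilde b]$ is an $A$-word of length at most $d_A + 4 d_{NA}$, as claimed.
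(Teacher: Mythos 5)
Part (c) of your proposal is correct and matches the paper's argument essentially verbatim (lift a word for $g_A$, correct the non-abelian component with a single commutator via Ore's theorem, using $[G_A,G_A]=\{e\}$). The other two parts have genuine gaps.

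For part (a), your plan of raising each $a\in A$ to the power $m_a/p_j$ and working prime by prime is sound in spirit, but it does not yield the constant $2/3$. Each such power costs up to $\prod_{k\neq j}p_k$ steps, so summing over $j$ gives a bound of order $\frac{s}{2}\max_j e_j\prod_j p_j$, and $s$ is unbounded; the invariant-factor remark does not obviously rescue this, since the given generators need not align with an invariant-factor decomposition. The paper instead orders the primes $p_1<\cdots<p_s$ and, to isolate the $j$-th Sylow factor, raises only to the power $p_1\cdots p_{j-1}$ (not $\prod_{k\neq j}p_k$); this makes the total cost $\sum_j e_j\lfloor p_j/2\rfloor\prod_{k<j}p_k$, whose ratio to $\max_j e_j\prod_j p_j$ is a rapidly convergent tail $\sum_j\prod_{k>j}p_k^{-1}$, bounded by $13/10$, giving $\frac{1}{2}\cdot\frac{13}{10}<\frac{2}{3}$. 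That asymmetric telescoping is the missing idea.

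For part (b), your sketch departs from the paper's mechanism in several essential ways, and the steps you flag as ``plausible'' are exactly the ones that need the work. The constants $C_A,C_L,C_S$ are not costs of reaching 3-cycles or root-subgroup elements; they are bounds on the \emph{conjugacy diameter} (covering number) of each family, i.e.\ how many times one must multiply a nontrivial conjugacy class with itself to cover the group. The paper's isolation of a factor $T_i$ from a factor $T_j$ proceeds via: (i) Schreier's lemma to find $x\in S^{2d+1}$ with $\rho_i(x)\neq e$ and $\rho_j(x)=e$; (ii) conjugating $x^{\pm1}$ by all of $S^d$ to produce, in $S^{4d+1}$, a set projecting to a full conjugacy class in $T_i$ and to $\{e\}$ in $T_j$; (iii) raising this to the covering-number power to fill $T_i$ while staying trivial on $T_j$. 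That accounts precisely for the factor $\max\{C_A,C_L,C_S\}(4d+1)$. The passage from killing one index to killing all $j<i$ is then a divide-and-conquer on the index set: partition $\{1,\dots,i-1\}$ into a binary tree of depth $\lceil\log_2(i-1)\rceil$, and at each merge take the commutator set $\{[x_1,x_2]\}$ of two sets each filling $T_i$ and each trivial on its half of the indices. Ore's conjecture (needed here, not just in part (c)) guarantees the commutator set still fills $T_i$, while the trivial indices accumulate; each merge multiplies the word length by $4$, giving the $4^{\lceil\log_2(i-1)\rceil}$ factor and, after summing over $i$ via Lemma~\ref{lecomput}, the $\frac{196}{243}n^3$. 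Your ``iterated pigeonhole $\approx n^2$ times an $n$-step synthesis loop'' does not implement this and would not by itself produce a factor bounded by $n^3$ with the stated constant.
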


The result of part \eqref{thmainab} is known and elementary: see \cite[Lemma 5.2]{BS92}, where the constant is marginally worse only due to the fact that sets of generators are not required to be symmetric (cfr. also \cite[Lemma 4.14]{He18}, which treats the case of $G=(\mathbb{Z}/p\mathbb{Z})^{e}$ under this assumption). Part \eqref{thmainmix} is quite natural, given the different (in some sense, opposite) behaviour of abelian and non-abelian factors, as it can be readily observed in its short proof.

Part \eqref{thmainnonab} is where the novelty of the result resides. Dependence on the maximum of the diameter of the components, instead of dependence on their product as Schreier's lemma (see Lemma~\ref{leschreier}) would naturally give us, was already established in \cite[Lemma 5.4]{BS92}: in that case, the diameter was bounded as $O(d^{2})$, where the dependence of the constant on $n$ was polynomial as in our statement. This result was improved in \cite[Lemma 4.13]{He18} to $O(d)$, but only in the case of alternating groups: this was done in part to fix a mistake in the use of the previously available result in Babai-Seress, which is why only alternating groups were considered, as permutation subgroups were the sole concern in both papers; a suggestion by Pyber, reported in Helfgott's paper, points at the results by Liebeck and Shalev \cite{LS01} as a way to prove a bound of $O(d)$ for a product of arbitrary non-abelian finite simple groups.

Indeed, the general approach that we follow in our proof owes its validity to \cite[Thm. 1.6]{LS01}, although we do not explicitly use the statement of that theorem: rather, we closely follow the proof of \cite[Lemma 4.13]{He18} and show that the same reasoning applies to groups of Lie type as well. The way that the lemma is related to Liebeck-Shalev is through the use of the fact that every element in $\text{Alt}(m)$ is a commutator (\cite[Lemma 4.12]{He18}, first proved in \cite[Thm. I]{Mi99}), which is essentially \cite[Thm. 1.6]{LS01} with $w=xyx^{-1}y^{-1}$ and a $c$ that is just equal to $1$ for $\text{Alt}(m)$; the same can be said for all non-abelian finite simple groups (i.e., $c=1$ in general) since Ore's conjecture \cite{Or51} was established to be true in \cite{LOST10}, a fact yet unproved at the time of \cite{LS01}.

\section{Preliminaries}

Before we turn to the proof of Theorem~\ref{thmain}, we will need a certain number of group-theoretic results.

\begin{lemma}[Schreier's lemma]\label{leschreier}
Let $G$ be a finite group, let $N\unlhd G$, and let $S$ be a set of generators of $G$ with $e\in S=S^{-1}$. Then $S^{2d+1}\cap N$ generates $N$, where $d=\textup{diam}(G/N)$.
\end{lemma}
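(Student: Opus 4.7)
The plan is to invoke the classical Schreier generator construction and observe that under the hypothesis, the Schreier generators can all be taken to have $S$-length at most $2d+1$. First I would pick a transversal $T \subseteq G$ for $G/N$ whose representative of $N$ itself is $e$, and -- crucially -- choose each $t \in T$ to be a word in $S$ of length at most $d$. This is possible because $\mathrm{diam}(G/N) = d$: every coset can be reached from the trivial coset by a word of length $\leq d$ in the natural image of $S$ in $G/N$, and such a word lifts to an element of $S^d$ lying in the desired coset of $G$.

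Next, for each $g \in G$ let $\bar g \in T$ denote its coset representative, and consider the classical Schreier set
\begin{equation*}
\Sigma = \{t s \, \overline{ts}^{\,-1} : t \in T,\ s \in S\} \subseteq N.
\end{equation*}
To show $\Sigma$ generates $N$, I would take an arbitrary $n \in N$, write $n = s_1 \cdots s_k$ with $s_i \in S$, set $t_0 = e$ and $t_i = \overline{s_1 \cdots s_i}$, and exploit the telescoping identity
\begin{equation*}
n = \prod_{i=1}^{k} \bigl(t_{i-1} s_i t_i^{-1}\bigr) \cdot t_k.
\end{equation*}
Each bracketed factor lies in $\Sigma$ by construction, and $t_k \in T$ lies in $N$ because $n \in N$, so by uniqueness of the representative of $N$ one has $t_k = e$. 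This expresses $n$ as a product of elements of $\Sigma$.

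Finally, I would bound the $S$-length of every element of $\Sigma$ by $d + 1 + d = 2d+1$, using $t \in S^d$, $s \in S$, and the symmetry $S = S^{-1}$ (which ensures $\overline{ts}^{\,-1} \in S^d$ as well). Combined with $\Sigma \subseteq N$ and the generation claim, this yields $\langle S^{2d+1} \cap N \rangle \supseteq \langle \Sigma \rangle = N$, as required.

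There is no genuine difficulty here; the most delicate points are simply remembering to build $e$ into the transversal (so that $t_k = e$ cleanly terminates the telescoping) and to invoke the symmetry $S = S^{-1}$ when bounding the length of $\overline{ts}^{\,-1}$. Both are routine but easy to miss, and together they are exactly what makes $2d+1$ the right exponent rather than something larger.
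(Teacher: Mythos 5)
Your proof is correct and follows essentially the same classical Schreier-transversal argument as the paper: lift a transversal of $G/N$ inside $S^d$, telescope an $S$-word for $n\in N$ into Schreier generators of length $\leq 2d+1$, and use $S=S^{-1}$ to bound the inverted transversal element. The only cosmetic difference is that you normalize the transversal so that $N$'s representative is $e$ (making the telescoping terminate at $t_k=e$), whereas the paper instead observes directly that the final leftover factor $\tau(\cdot)s_k$ already lies in $N$ and has length $\leq d+1$.
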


\begin{proof}
This is a standard result dating back to Schreier \cite{Sc27}, written in various fashions across the literature according to the needs of the user; let us prove here the present version.

Calling $\pi:G\rightarrow G/N$ the natural projection, by definition we have $\pi(S)^{d}=G/N$; this equality means that $S^{d}$ contains at least one representative for each coset $gN$ in $G$. For any coset $gN$, choose a representative $\tau(g)\in S^{d}$. Then, for any $h\in N$ and any way to write $h$ as a product of elements $s_{i}\in S$, we have:
\begin{align*}
h= & \ s_{1}s_{2}\ldots s_{k}= \\
= & \ (s_{1}\tau(s_{1})^{-1})\cdot(\tau(s_{1})s_{2}\tau(\tau(s_{1})s_{2})^{-1})\cdot\ldots\cdot(\tau(\tau(\tau(\ldots)s_{k-2})s_{k-1})s_{k}).
\end{align*}
Each element of the form $\tau(x)s_{i}\tau(\tau(x)s_{i})^{-1}$ is contained in $S^{2d+1}\cap N$, so the same can be said about the last element of the form $\tau(x)s_{k}$ (since $h$ itself is in $N$); therefore $S^{2d+1}\cap N$ is a generating set of $N$.
\end{proof}

\begin{proposition}[Ore's conjecture]\label{prore}
Let $G$ be a finite non-abelian simple group. Then, for any $g\in G$, there exist $g_{1},g_{2}\in G$ such that $g=[g_{1},g_{2}]$.
\end{proposition}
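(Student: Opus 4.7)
The statement is Ore's conjecture, which is far too deep to reprove from scratch: the plan is to invoke the classification of finite simple groups and the existing literature, with the hardest cases being settled by Liebeck, O'Brien, Shalev, and Tiep in \cite{LOST10}. I would split $G$ according to the classification into: alternating groups $\text{Alt}(m)$ with $m\geq 5$, classical groups of Lie type, exceptional groups of Lie type, and the sporadic groups together with the Tits group. Each family is handled by different techniques, and I would simply assemble the known results for each.

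For $\text{Alt}(m)$, Ore himself \cite{Or51} observed that every element is a commutator, and this is the content of the Miller-type result \cite{Mi99} already cited in the excerpt. For classical groups of Lie type over sufficiently large fields, the conjecture was established by Ellers--Gordeev using Gauss decomposition techniques, leaving only a finite list of small-field classical cases, exceptional groups, and the 26 sporadics plus the Tits group. For the sporadic groups and the Tits group, verification can be done directly from the character tables in the ATLAS via the Frobenius counting formula
\begin{equation*}
N(g)=|G|\sum_{\chi\in\mathrm{Irr}(G)}\frac{\chi(g)}{\chi(1)},
\end{equation*}
where $N(g)$ counts pairs $(g_{1},g_{2})$ with $[g_{1},g_{2}]=g$; showing that this sum is nonzero for every $g$ suffices.

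The main obstacle -- and the actual content of \cite{LOST10} -- is the remaining case of groups of Lie type over small fields, where the Ellers--Gordeev argument breaks down and one cannot simply tabulate characters as in the sporadic case. Liebeck--O'Brien--Shalev--Tiep handle this by a combination of refined character bounds (in the spirit of Frobenius's formula above) and extensive computer computation, together with reductions to a manageable list of low-rank small-field groups. My proposal is therefore to state the result as a citation to \cite{LOST10}, noting that the earlier cases reduce to classical results of Ore, Miller, and Ellers--Gordeev, and that the hard kernel of the proof is the character-theoretic analysis of small-field Lie type groups carried out in that paper.
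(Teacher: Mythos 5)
Your proposal is correct and matches the paper's approach exactly: the paper's proof of this proposition is a one-line citation to \cite{LOST10} ``for references to previously known results and for the proof of the final case.'' Your more detailed historical survey (Ore/Miller for alternating groups, Ellers--Gordeev for large-field classical groups, ATLAS plus Frobenius's formula for sporadics and the Tits group, and character-theoretic/computational work in \cite{LOST10} for the small-field Lie type residue) is accurate but adds nothing beyond what the citation already covers.
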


\begin{proof}
See \cite{LOST10}, for references to previously known results and for the proof of the final case.
\end{proof}

Notice that, for any finite non-abelian simple group $G$, any nontrivial conjugacy class $C$ must generate the whole $G$ (because $\langle C\rangle$ would be a normal subgroup). This observation justifies the following definition.

\begin{definition}\label{decd}
Let $G$ be a finite non-abelian simple group. The conjugacy diameter $\textup{cd}(G)$ is the smallest $m$ such that $(C\cup C^{-1}\cup\{e\})^{m}=G$ for all nontrivial conjugacy classes $C$.
\end{definition}

We will need to have bounds for $\text{cd}(G)$.

\begin{proposition}\label{prcd}
Let $G$ be a finite non-abelian simple group.
\begin{enumerate}[(a)]
\item\label{prcdAlt} If $G$ is an alternating group of degree $m$, then $\textup{cd}(G)\leq\max\left\{3,\left\lfloor\frac{m}{2}\right\rfloor\right\}$.
\item\label{prcdLie} If $G$ is a group of Lie type of untwisted rank $r$, then $\textup{cd}(G)\leq 8(5r+7)$.
\item\label{prcdSpor} If $G$ is a sporadic group or the Tits group, then $\textup{cd}(G)\leq 6$.
\end{enumerate}
\end{proposition}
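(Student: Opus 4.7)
The plan is to prove each of the three parts separately, matching the three branches of the classification of finite simple groups; each requires different tools. In every case, by Definition~\ref{decd}, it suffices to show that for every nontrivial conjugacy class $C$ of $G$, every element of $G$ can be written as a product of at most the claimed number of elements of $C\cup C^{-1}$, which is essentially a uniform upper bound on a covering-number-type invariant.

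For part~\eqref{prcdAlt}, I would invoke the classical fact that every element of $\text{Alt}(m)$ is a product of at most $\lfloor m/2\rfloor$ $3$-cycles. Since all $3$-cycles form a single conjugacy class in $\text{Alt}(m)$ for $m\geq 5$, this directly yields the bound $\textup{cd}(G)\leq\lfloor m/2\rfloor$ when $C$ is that class. For any other nontrivial class $C$ with representative $\sigma$, which necessarily moves at least four points, one can either express elements of $\text{Alt}(m)$ as a shorter product of conjugates of $\sigma$, or reduce to the $3$-cycle case by forming suitable commutators $[\sigma,\tau^{-1}\sigma\tau]$ whose support is confined to three points. For the small degrees $m\in\{5,6,7\}$ the constant $3$ dominates, and these finitely many cases can be handled by direct inspection of the character tables.

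For part~\eqref{prcdLie}, the bound $8(5r+7)$ is linear in the untwisted rank, which suggests using covering-number results proved via the Bruhat decomposition $G=BWB$. A nontrivial class $C$ must meet some structured subgroup (a torus, a root subgroup, or one of their normalizers) in a controlled way, and then the Chevalley commutator relations among the root subgroups $U_{\alpha}$ allow one to reconstruct any element of $G$ from a number of conjugates of a class representative linear in the number of positive roots, hence linear in $r$. The explicit constant $8$ appears to absorb the switch between $C$ and $C^{-1}$ together with a uniform treatment of twisted and untwisted types.

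For part~\eqref{prcdSpor}, since there are only $26$ sporadic groups plus the Tits group, the problem reduces to a finite verification. Using the character tables and class multiplication coefficients tabulated in the ATLAS of Finite Groups, one computes (or at least bounds) $\textup{cd}$ for each of these $27$ groups; the maximum observed is $\leq 6$. The main obstacle in the whole proposition is part~\eqref{prcdLie}: extracting the precise linear bound $8(5r+7)$ — rather than a weaker asymptotic $O(r)$ — requires tracking every constant through a careful uniform analysis across all families (classical and exceptional, twisted and untwisted) of Lie type. Parts~\eqref{prcdAlt} and~\eqref{prcdSpor} follow more cleanly from existing combinatorial arguments and from direct ATLAS-based verification, respectively.
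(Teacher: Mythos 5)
The paper's proof is deceptively short: it observes that $\textup{cd}(G)$ is bounded above by the \emph{covering number} $\textup{cn}(G)=\min\{m\mid \forall C\neq\{e\},\ C^{m}=G\}$, since $C^{m}\subseteq(C\cup C^{-1}\cup\{e\})^{m}$, and then simply cites the literature for bounds on $\textup{cn}(G)$: Dvir (crediting Stavi) for alternating groups, Lawther--Liebeck for Lie type, and Zisser's table together with the ATLAS for the sporadics and the Tits group. Your proposal misses this reduction entirely and instead tries to sketch proofs of what are in fact deep published theorems. This leads to several genuine gaps.

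For part (a), the statement requires the bound to hold for \emph{every} nontrivial conjugacy class $C$. Your argument establishes it only for the class of $3$-cycles; for an arbitrary class with representative $\sigma$, you gesture at writing a $3$-cycle as a commutator $[\sigma,\tau^{-1}\sigma\tau]$ and then substituting into the $3$-cycle expansion, but even if completed this would multiply the length by $4$ (a commutator uses four factors from $C\cup C^{-1}$) and so would give a bound of roughly $4\lfloor m/2\rfloor$, not $\lfloor m/2\rfloor$. The uniform bound $\textup{cn}(\textup{Alt}(m))\leq\max\{3,\lfloor m/2\rfloor\}$ across \emph{all} classes is precisely the nontrivial content of Dvir's Theorem 9.1. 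For part (b), your description of a Bruhat/Chevalley-relations argument is a plausible heuristic for why the answer is $O(r)$, but it cannot produce the explicit constant $8(5r+7)$; that precise bound is Theorem 1 of Lawther--Liebeck and is not something one reconstructs from the sketch you give, nor do you say what source would supply the missing details. For part (c), the ATLAS-verification idea is correct in spirit and close to what the paper does, but you should note that Zisser's table covers only the $26$ sporadic groups and that the Tits group ${}^{2}F_{4}(2)'$ needs a separate argument (the paper uses Zisser's Lemma 3 together with ATLAS character values). As written, the proposal is an outline of how one might hope to attack these questions from scratch rather than a proof; the missing ingredient throughout is the identification of $\textup{cd}\leq\textup{cn}$ and the appeal to the existing covering-number literature.
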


\begin{proof}
First of all, $\text{cd}(G)$ is trivially bounded by definition by the \textit{covering number} of $G$, which is defined as $\text{cn}(G)=\min\{m|\forall C\neq\{e\}(C^{m}=G)\}$; therefore it suffices to give bounds for $\text{cn}(G)$.

For \eqref{prcdAlt}, see \cite[Thm. 9.1]{Dv85} (our specific result is credited therein to a manuscript by J. Stavi). For \eqref{prcdLie}, see \cite[Thm. 1]{LL98}. To prove \eqref{prcdSpor}, the sporadic groups all satisfy $\text{cn}(G)\leq 6$: this inequality can be checked directly from \cite[Table 1]{Zi89}; if $G={}^{2}F_{4}(2)'$ is the Tits group, we can show the same inequality using \cite[Lemma 3]{Zi89} and the character values reported in the ATLAS of Finite Groups \cite{CCNPW85}.
\end{proof}

Let us also perform a side computation separately from the proof of the main theorem, so as not to bog down the exposition there.

\begin{lemma}\label{lecomput}
Let $n\geq 1$. Then:
\begin{equation*}
\sum_{i=1}^{n-1}4^{\lceil\log_{2}i\rceil}<\frac{196}{243}n^{3}.
\end{equation*}
\end{lemma}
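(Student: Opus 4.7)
My strategy is to evaluate the sum in closed form using the dyadic structure of $\lceil\log_{2}i\rceil$ and then reduce the desired inequality to a one-variable cubic estimate that yields to an explicit factorization.

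After dispatching $n\leq 2$ directly (the sum is $0$ for $n=1$ and $1$ for $n=2$, both well below $\tfrac{196}{243}n^{3}$), I suppose $n\geq 3$ and set $K=\lceil\log_{2}(n-1)\rceil\geq 1$, $a=2^{K-1}$, and $m=n-1-a$, so that $1\leq m\leq a$. Since $\lceil\log_{2}i\rceil=k$ on the $2^{k-1}$ values $i\in(2^{k-1},2^{k}]$, and $i=1$ contributes $4^{0}=1$, a geometric summation in $k$ yields
\[
S:=\sum_{i=1}^{n-1}4^{\lceil\log_{2}i\rceil}=1+\sum_{k=1}^{K-1}2^{k-1}\cdot 4^{k}+m\cdot 4^{K}=\frac{3+4a^{3}}{7}+4ma^{2}.
\]
Setting $y=n=m+a+1$ and rearranging, the desired inequality $S<\tfrac{196}{243}n^{3}$ becomes
\[
a^{3}g(y/a)+4a^{2}>\tfrac{3}{7},\qquad g(t):=\tfrac{196}{243}t^{3}-4t+\tfrac{24}{7}.
\]

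The heart of the argument is the explicit factorization
\[
g(t)=\tfrac{196}{243}\Bigl(t-\tfrac{9}{7}\Bigr)^{2}\Bigl(t+\tfrac{18}{7}\Bigr),
\]
easily verified by expansion, which gives $g(t)\geq 0$ for all $t\geq 0$. Since $a\geq 1$, the remaining slack $4a^{2}\geq 4>\tfrac{3}{7}$ then closes the inequality. The only mild obstacle is guessing this factorization: the constant $\tfrac{196}{243}$ is in fact precisely the value that forces $g$ to have a positive double root, and the double root $t=9/7$ corresponds to choices $m\approx\tfrac{2a}{7}$ along which $S/n^{3}\to\tfrac{196}{243}$ as $K\to\infty$, confirming that the stated constant is asymptotically sharp.
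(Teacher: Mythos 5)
Your proof is correct, and at heart it follows the same route as the paper's: compute the sum in closed form over the dyadic blocks and then reduce the desired bound to a one-variable cubic inequality whose extremal value is $\tfrac{196}{243}$. The differences are in the packaging. The paper writes the sum as $\tfrac{3}{7}+2^{2m'}\bigl(1-\tfrac{3}{7}2^{m'}\bigr)(n-1)^{3}$ with $2^{m'}\in[1,2)$, bounds $x^{2}(1-\tfrac{3}{7}x)\leq\tfrac{196}{243}$ on that interval (a short calculus check, or equivalently the factorization $\tfrac{3}{7}x^{3}-x^{2}+\tfrac{196}{243}=\tfrac{3}{7}(x-\tfrac{14}{9})^{2}(x+\tfrac{7}{9})$), and then disposes of the leftover $\tfrac{3}{7}$ via the side inequality $\tfrac{3}{7}<\tfrac{196}{243}(3n^{2}-3n+1)=\tfrac{196}{243}\bigl(n^{3}-(n-1)^{3}\bigr)$. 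You instead normalize by $a=2^{K-1}$ and use the factorization
\[
g(t)=\tfrac{196}{243}t^{3}-4t+\tfrac{24}{7}=\tfrac{196}{243}\Bigl(t-\tfrac{9}{7}\Bigr)^{2}\Bigl(t+\tfrac{18}{7}\Bigr)\geq 0\quad(t\geq 0),
\]
which I have verified; the residual $4a^{2}\geq 4>\tfrac{3}{7}$ absorbs the constant. Your version is slightly more self-contained (the nonnegativity is visible by pure algebra, and you handle $n\leq 2$ explicitly, a case the paper's substitution silently excludes), and your remark that the double root $t=9/7$ corresponds to $m\approx 2a/7$ gives a clean explanation of why the constant $\tfrac{196}{243}$ is asymptotically sharp; the paper's presentation is a bit shorter but leaves the cubic maximization and the small-$n$ cases to the reader.
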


\begin{proof}
Call $m=\lceil\log_{2}(n-1)\rceil$, and write $n-1=2^{m-1}+l$, where $1\leq l\leq 2^{m-1}$; $\lceil\log_{2}i\rceil=j$ for all $i\in(2^{j-1},2^{j}]$, hence we can rewrite the sum in the statement as:
\begin{align*}
\sum_{i=1}^{n-1}4^{\lceil\log_{2}i\rceil}= & \ 1+\sum_{j=1}^{m-1}4^{j}2^{j-1}+4^{m}l=\frac{1}{2}+\frac{1}{2}\frac{8^{m}-1}{7}+4^{m}\left(2^{\log_{2}(n-1)}-2^{m-1}\right)= \\
= & \ \frac{3}{7}+4^{m}2^{\log_{2}(n-1)}-\frac{3}{7}8^{m}=\frac{3}{7}+2^{2m'}\left(1-\frac{3}{7}2^{m'}\right)(n-1)^{3},
\end{align*}
where $m'=m-\log_{2}(n-1)\in[0,1)$. We have $x^{2}\left(1-\frac{3}{7}x\right)\leq\frac{196}{243}$ for $x\in[1,2)$, and $\frac{3}{7}<\frac{196}{243}(3n^{2}-3n+1)$ for all $n\geq 1$, so the result is proved.
\end{proof}

\section{Proof of the main theorem}

\begin{proof}[Proof of Thm.~\ref{thmain}\ref{thmainab}]
Let $G=(\mathbb{Z}/p_{1}\mathbb{Z})^{e_{1}}\times(\mathbb{Z}/p_{2}\mathbb{Z})^{e_{2}}\times\ldots\times(\mathbb{Z}/p_{s}\mathbb{Z})^{e_{s}}$, with primes $p_{1}<p_{2}<\ldots<p_{s}$; we have:
\begin{equation}\label{eqabprod}
G=A_{1}A_{2}\ldots A_{s}
\end{equation}
(we are using multiplicative notation even if $G$ is abelian) where the $A_{i}$ are any sets such that:
\begin{align}\label{eqabfact}
A_{i,i}= & (\mathbb{Z}/p_{i}\mathbb{Z})^{e_{i}} & A_{i,j}= & (0)^{e_{j}} \ \ \ (\forall j<i)
\end{align}
where $A_{i,j}$ is the projection of $A_{i}$ to the $j$-th component of $G$.

Let $S$ be a set of generators of $G$ with $e\in S=S^{-1}$: $\{t^{p_{1}\ldots p_{i-1}}|t\in S\}\subseteq S^{p_{1}\ldots p_{i-1}}$ has elements that are all $0$ on the first $i-1$ components of $G$ and that still generate the $i$-th one since $(p_{1}\ldots p_{i-1},p_{i})=1$; from now on, let us focus exclusively on the $i$-th component. $(\mathbb{Z}/p_{i}\mathbb{Z})^{e_{i}}$ is also a vector space over $\mathbb{Z}/p_{i}\mathbb{Z}$, so there must be $e_{i}$ generators that also form a basis: any element of the space can be written as a linear combination of those generators with coefficients in $\left[-\left\lfloor\frac{p_{i}}{2}\right\rfloor,\left\lfloor\frac{p_{i}}{2}\right\rfloor\right]$, which corresponds to a word of length $\leq e_{i}\left\lfloor\frac{p_{i}}{2}\right\rfloor$; thus, each set $A_{i}$ with the properties in \eqref{eqabfact} is covered in $e_{i}\left\lfloor\frac{p_{i}}{2}\right\rfloor p_{1}\ldots p_{i-1}$ steps. This fact and \eqref{eqabprod} imply that $G$ has diameter bounded by:
\begin{equation}\label{eqabprimes}
\sum_{i=1}^{s}\left(e_{i}\left\lfloor\frac{p_{i}}{2}\right\rfloor\prod_{j=1}^{i-1}p_{j}\right)\leq\frac{1}{2}\max\{e_{j}|1\leq j\leq s\}\prod_{j=1}^{s}p_{j}\cdot\sum_{i=1}^{s}\left(\prod_{j=i+1}^{s}\frac{1}{p_{j}}\right).
\end{equation}
The sum in \eqref{eqabprimes} is maximized when each $p_{j}$ is the $j$-th prime number: for $s=1$ the sum is $1$ and for $s=2$ it is bounded by $\frac{4}{3}$; for $s\geq 3$, we use $p_{s}\geq 5$ and $p_{j}\geq 3$ for all $1<j<s$, so that the sum is bounded by $1+\frac{1}{5}\frac{1}{1-\frac{1}{3}}=\frac{13}{10}$. The result follows.
\end{proof}

\begin{proof}[Proof of Thm.~\ref{thmain}\ref{thmainnonab}]
Calling $G_{j}=\prod_{i=1}^{j}T_{i}$, we have natural projections $\pi_{j}:G=G_{n}\rightarrow G_{j}$ and $\rho_{j_{1},j_{2}}:G_{j_{1}}\rightarrow T_{j_{2}}$ for any $j_{1}\geq j_{2}$. As in \eqref{eqabprod}, we write $G$ as a product of subsets $A_{i}$ with $\rho_{n,i}(A_{i})=T_{i}$ and $\rho_{n,j}(A_{i})=\{e\}$ for all $j<i$, and our aim is to cover each one of them.

Suppose that we have two subsets $X_{1},X_{2}$ of $G$ for which $\rho_{n,i}(X_{1})=\rho_{n,i}(X_{2})=T_{i}$ for some fixed $i\in\{1,\ldots,n\}$ and that have $\rho_{n,j_{1}}(X_{1})=\{e\}=\rho_{n,j_{2}}(X_{2})$ for all $j_{1}\in I_{1},j_{2}\in I_{2}$, where $I_{1},I_{2}$ are two subsets of indices in $\{1,\ldots,n\}\setminus\{i\}$: then, the set $X=\{[x_{1},x_{2}]|x_{1}\in X_{1},x_{2}\in X_{2}\}$ has $\rho_{n,i}(X)=T_{i}$ by Proposition~\ref{prore} (Ore's conjecture) and $\rho_{n,j}(X)=\{e\}$ for all $j\in I_{1}\cup I_{2}$. Now consider the set of indices $I=\{1,\ldots,i-1\}$: if $|I|>1$ we can partition $I$ into two parts of size $\left\lfloor\frac{|I|}{2}\right\rfloor,\left\lceil\frac{|I|}{2}\right\rceil$, then partition each part $I'$ with $|I'|>1$ into two new parts again of size $\left\lfloor\frac{|I'|}{2}\right\rfloor,\left\lceil\frac{|I'|}{2}\right\rceil$, and continue until we reach a subdivision where all sets have size $1$; the tree of partitions that we constructed to reach this subdivision will have exactly $\lceil\log_{2}|I|\rceil$ layers. Notice that, given any two parts $I_{1},I_{2}$ inside the tree, if we have two subsets $X_{1},X_{2}$ (as described before) that are covered by a certain $S^{a}$, the resulting set $X$ will be covered by $S^{4a}$: this observation, together with the information about the layers, tells us that if we can cover sets $X_{i,j}$ with $\rho_{n,i}(X_{i,j})=T_{i}$ and $\rho_{n,j}(X_{i,j})=\{e\}$ in $a$ steps (for a fixed $i>1$ and all $j<i$) then we are able to cover a set $A_{i}$ defined as at the beginning of the proof in $4^{\lceil\log_{2}(i-1)\rceil}a$ steps as well.

Let us start now with a generating set $S$ with $e\in S=S^{-1}$ and fix two indices $i\geq j$: $\pi_{i}(S)$ is a set of generators for $G_{i}$, and the set $\pi_{i}(S)^{2d+1}$ contains generators for the whole $T_{1}\times\ldots\times T_{j-1}\times\{e\}\times T_{j+1}\times\ldots\times T_{i}=G_{i}\cap\ker(\rho_{i,j})$ by Lemma~\ref{leschreier} (Schreier's lemma), where $d$ is as in the statement. In particular, there is an element $x\in S^{2d+1}$ with $\rho_{n,i}(x)\neq e$ and $\rho_{n,j}(x)=e$; by hypothesis $\rho_{n,i}(S^{d})=T_{i}$, which means that there is a set $S'=\{yxy^{-1}|y\in S^{d}\}\cup\{yx^{-1}y^{-1}|y\in S^{d}\}\cup\{e\}\subseteq S^{4d+1}$ with $\rho_{n,i}(S')=C\cup C^{-1}\cup\{e\}$ and $\rho_{n,j}(S')=\{e\}$, where $C$ is the conjugacy class of $\rho_{n,i}(x)$. By Proposition~\ref{prcd}, $\rho_{n,i}(S'^{\max\left\{3,\left\lfloor\frac{m_{i}}{2}\right\rfloor\right\}})=T_{i}$ if $T_{i}=\text{Alt}(m_{i})$, $\rho_{n,i}(S'^{8(5r_{i}+7)})=T_{i}$ if $T_{i}$ is of Lie type of untwisted rank $r_{i}$, and $\rho_{n,i}(S'^{6})=T_{i}$ otherwise; in all three cases, the projection to $T_{j}$ is still $\{e\}$, therefore we managed to cover a set $X_{i,j}$ of the aforementioned form.

A set $A_{1}$ is reached in $d$ steps, hence the final count for the whole $G$ following the reasoning above is:
\begin{equation*}
\text{diam}(G)\leq d+\sum_{i=2}^{n}4^{\lceil\log_{2}(i-1)\rceil}x_{i}(4d+1),
\end{equation*}
where $x_{i}$ is either $\max\left\{3,\left\lfloor\frac{m_{i}}{2}\right\rfloor\right\}$, $8(5r_{i}+7)$ or $6$, accordingly. The result follows by Lemma~\ref{lecomput}.
\end{proof}

A note on the connection between the proof given above and \cite{LS01}. As mentioned before, Pyber pointed at \cite{LS01} as a way to prove linear dependence on $d$ for products of arbitrary non-abelian finite simple groups. In particular, \cite[Thm. 1.6]{LS01} seems to fit the bill: it states that for any word $w$ that is not a law in a finite simple group $T$ there is $c_{w}\in\mathbb{N}$, depending on $w$ but not on $T$, such that any element of $T$ can be written as a product of at most $c_{w}$ values of $w$. We use this property, in disguise, when we want to pass from two subsets being indentically $e$ at indices $I_{1},I_{2}$ and filling an entire component $T_{i}$ to a third subset that also fills the same component and is $e$ for the whole $I_{1}\cup I_{2}$: the creation of the new subset is made possible by taking $c_{w}$ values of a word $w$, so that $T_{i}$ remains filled, where $w$ has two distinct letters $x_{1},x_{2}$ and presents the same number of $x_{i}$ and $x_{i}^{-1}$ for $i\in\{1,2\}$, so that when any one $x_{i}$ is equal to $e$ on a given factor of the product $G$ the result is $e$ on that factor; in our case, $w$ was the shortest nontrivial word with these characteristics, namely the commutator $[x_{1},x_{2}]=x_{1}x_{2}x_{1}^{-1}x_{2}^{-1}$ (not a law for any non-abelian group), and $c_{w}=1$ by Ore's conjecture. In this sense $w=[x_{1},x_{2}]$ is also computationally the best word we can expect, for it yields the lowest possible value of $|w|c_{w}$, the $4$ that we find in Lemma~\ref{lecomput}.

\begin{proof}[Proof of Thm.~\ref{thmain}\ref{thmainmix}]
Define the two projections $\pi_{A},\pi_{NA}$ in the obvious way; for any generating set $S$ of $G$, by definition there are a subset $X_{A}\subseteq S^{d_{A}}$ with $\pi_{A}(X_{A})=G_{A}$ and a subset $X_{NA}\subseteq S^{d_{NA}}$ with $\pi_{NA}(X_{NA})=G_{NA}$, and then:
\begin{equation*}
G=X_{A}[X_{NA},X_{NA}]\subseteq S^{d_{A}+4d_{NA}},
\end{equation*}
again by the fact that $[T,T]=T$ for non-abelian finite simple groups by Ore's conjecture and $[T,T]=\{e\}$ for abelian groups.
\end{proof}

\section{Concluding remarks}

One could wonder how tight the inequalities in Theorem~\ref{thmain} are. The results are essentially in line with what is generally expected from the behaviour of the diameter of finite groups. The abelian case is tight up to constant: for the group $G(x)=\prod_{p\leq x}\mathbb{Z}/p\mathbb{Z}$ (nontrivial for $x\geq 2$) one generator $s=(1,1,\ldots,1)$ is enough, and then the diameter of $\text{Cay}(G(x),\{s,s^{-1},e\})$ is $\frac{1}{2}|G(x)|$; the fact that abelian groups behave in the worst possible way, i.e. linearly in the size of the group, should not be a surprise for anyone.

The non-abelian bound of case \eqref{thmainnonab} also matches what is anticipated in general. Babai's conjecture posits a polylogarithmic bound on the diameter of finite simple groups: the natural extension to direct products of such groups would suggest a bound of the form $n^{k}d$, which is exactly what we have obtained. Case \eqref{thmainmix} also fits into the same idea, as a product $|G|=|G_{A}||G_{NA}|$ becomes a sum of the corresponding diameters.

The dependence on $d$ in Theorem~\ref{thmain}\ref{thmainnonab} is almost best possible by definition (we cannot drop the ``almost'', as $m,r$ are not independent from $d$). It would be more interesting to understand which power of $n$ is the correct one: here we have proved $O_{m,r,d}(n^{3})$, and we can quickly show that the bound is $\Omega_{m,r,d}(n)$, as illustrated in the following example.

\begin{example}\label{exlow}
If $G=(\text{Alt}(m))^{n}$ then $\text{diam}(G)=\Omega(m^{2}n)$. We prove it for $m\geq 5$ odd and $n$ even, but the proof is analogous for the general case.

Consider the two permutations $\sigma=(1\ 2\ 3\ \ldots\ m)$ and $\tau=(1\ 2\ 3\ \ldots\ m-2)$; they generate $\text{Alt}(m)$, and the elements:
\begin{align*}
s_{0}= & \ (\sigma,\sigma,\ldots,\sigma,\sigma), \\
s_{1}= & \ (\tau,\sigma,\ldots,\sigma,\sigma), \\
s_{2}= & \ (\sigma,\tau,\ldots,\sigma,\sigma), \\
 & \ \ldots \\
s_{n}= & \ (\sigma,\sigma,\ldots,\sigma,\tau)
\end{align*}
generate $G$. Let $S=\{e\}\cup\{s_{i},s_{i}^{-1}\}_{0\leq i\leq n}$: to prove the lower bound on the diameter of $G$, we construct a function $f:G\rightarrow\mathbb{N}$ such that there are two elements $g_{1},g_{2}\in G$ with $|f(g_{1})-f(g_{2})|$ large and such that $|f(g)-f(gs)|$ is small for any $g\in G,s\in S$; this is a known technique to prove lower bounds for the diameter of $\text{Sym}(m)$, as shown for instance in \cite[Prop. 3.6]{Ta11}.

Call $c(g,i,j)=(g(i))(j)$ the image of $j\in\{1,\ldots,m\}$ under the $i$-th component of $g\in G$, for $1\leq i\leq n$; define:
\begin{equation*}
f(g)=\sum_{j=1}^{m}\sum_{i=1}^{n}||c(g,i+1,j)-c(g,i,j)||_{\mathbb{Z}/m\mathbb{Z}},
\end{equation*}
where $||a||_{\mathbb{Z}/m\mathbb{Z}}=\min\{a,m-a\}$ (in the case $i=n$, $c(g,n+1,j)$ means $c(g,1,j)$). First, $f(e)=0$; also, if we call $e_{m}$ the identity element in $\text{Alt}(m)$ and $\eta=\left(1\ \frac{m+1}{2}\right)\left(2\ \frac{m+3}{2}\right)\ldots\left(\frac{m-1}{2}\ m-1\right)$, for $g\in G$ that has $e_{m}$ at all odd components and $\eta$ at all even ones we have $f(g)=\frac{1}{2}(m-1)^{2}n$. Finally, notice that $\sigma$ simply adds $1$ modulo $m$ to all the elements of $\{1,\ldots,m\}$, so that $f(g)=f(gs_{0}^{\pm 1})$, while $\tau$ is defined so that it adds $1$ for $m-3$ elements, adds $3$ (modulo $m$) for one element and fixes two elements, which means that $|f(g)-f(gs_{i}^{\pm 1})|\leq 10$; these facts taken together imply that $\text{diam}(G,S)\geq\frac{1}{20}(m-1)^{2}n$.
\end{example}

The correct (or even expected) order of magnitude for a bound of the form $\text{diam}(G)=O_{m,r}(n^{k}d)$ for a generic product $G$ is not known to the author, besides knowing that $1\leq k\leq 3$ by Theorem~\ref{thmain} and Example~\ref{exlow}.

\end{document}